\numberwithin{equation}{section}
\newcommand{\E}{\mathbb{E}\,}
\newtheorem{theorem}{Theorem}[section]
\newtheorem{proposition}[theorem]{Proposition}
\newtheorem{remark}[theorem]{Remark}
\newtheorem{example}[theorem]{Example}
\newtheorem{definition}[theorem]{Definition}
\newcommand{\N}{\mathbb{N}}
\newcommand{\lcx}{\leq_{\text{\rm cx}}}
\newcommand{\C}{\mathcal{C}}
\begin{document}

%\begin{frontmatter}

\title{A sharpening of a problem on Bernstein polynomials and convex function and related results}

\author{Andrzej Komisarski}
\email{andkom@math.uni.lodz.pl}
\address{Department of Probability Theory and Statistics, Faculty of Mathematics and Computer Science,
University of \L\'od\'z, ul. Banacha 22, 90-238 \L\'od\'z, Poland}
\author{Teresa Rajba}
\email{trajba@ath.bielsko.pl}
\address{Department of Mathematics, University of Bielsko-Bia\l a, ul. Willowa 2, 43-309 Bielsko-Bia\l a, Poland}

\begin{abstract}
We present a short proof of a conjecture proposed by I.\ Ra\c{s}a (2017), which is an inequality involving basic Bernstein polynomials and convex functions. This proof was given in the letter to I.\ Ra\c{s}a (2017). The  methods of our proof allow us to obtain some extended versions of this inequality as well as other inequalities given by I.\ Ra\c{s}a.
As a tool we use stochastic convex ordering relations.
We propose also some generalizations of the binomial convex concentration inequality. We use it to insert some additional expressions between left and right sides of the Ra\c{s}a inequalities.
\end{abstract}

\keywords{Bernstein polynomials, Bernstein operators, stochastic convex ordering, convex functions,
functional inequalities including convexity, binomial convex concentration inequality}

\subjclass[2010]{60E15, 39B62}

%\end{frontmatter}

% \linenumbers

\maketitle

\section{Introduction}
For $n\in\N$ 
the classical Bernstein operators $B_n:\C([0,1])\to\C([0,1])$, defined by
$$B_n(f)(x)=\sum_{i=0}^n p_{n,i}(x)f\left(\tfrac in\right)\quad\text{for } x\in[0,1],$$\\
with the Bernstein basic polynomials 
\[
 p_{n,i}(x)=\binom{n}{i}x^i(1-x)^{n-i} \quad \text{for } \ i=0,1,\dots,n, \  x\in[0,1],
\]
are the most prominent positive linear approximation operators (see \cite{Lorentz1953}).
If $f\in\C([0,1])$ is a convex function, the inequality 
\begin{equation}\label{eq:Rasa}
 \sum_{i=0}^n \sum_{j=0}^n \left(p_{n,i}(x)p_{n,j}(x)+p_{n,i}(y)p_{n,j}(y)-2p_{n,i}(x)p_{n,j}(y)\right)f\left(\frac{i+j}{2n}\right)\geq0
\end{equation}
is valid for all $x,y\in[0,1]$.

This inequality involving Bernstein basic polynomials and convex functions was stated as an open problems 25 years ago by I.\ Ra\c{s}a. During the Conference on Ulam's Type Stability (Rytro, Poland, 2014), Ra\c{s}a \cite{Rasa2014b} recalled his problem.

Inequalities of type \eqref{eq:Rasa} have important applications. They are useful when studying whether the Bernstein-Schnabl operators preserve convexity (see \cite{Rasa2014, Rasa2017}).

Recently, J.\ Mrowiec, T.\ Rajba and S.\ W\k{a}sowicz \cite{MRW2017} affirmed the conjecture \eqref{eq:Rasa} in positive. Their proof makes heavy use of probability theory.
As a tool they applied a concept of stochastic convex orderings, as well as the so-called binomial convex concentration inequality. Later,
U.\ Abel \cite{Abel2016} gave an elementary proof of \eqref{eq:Rasa}, which was much shorter than that given in \cite{MRW2017}. Very recently, A. Komisarski and T. Rajba \cite{KomRaj2018} gave a new, very short proof of \eqref{eq:Rasa}, which is significantly simpler and shorter than that given by U.\ Abel \cite{Abel2016}. As a tool the authors use both stochastic convex orders as well as the usual stochastic order.
 
Let us recall some basic notations and results on stochastic ordering (see \cite{Shaked2007}).
If $\mu$ and $\nu$ are two probability distributions such that
$$\int\varphi(x)\mu(dx)\leq\int\varphi(x)\nu(dx) \quad \text{for all convex functions }\ \varphi\colon\mathbb R\to\mathbb R,$$
provided the integrals exist, then $\mu$ is said to be \emph{smaller than $\nu$ in the convex stochastic order} (denoted as $\mu\lcx\nu$).

The binomial distribution with parameters $n\in\N$ and $p\in[0,1]$ (denoted by $B(n,p)$)
is the probability distribution given by
$$B(n,p)(\{k\})=p_{n,k}(p)=\binom nkp^k(1-p)^{n-k}\quad\text{for }k=0,1,\dots,n$$
and $B(n,p)(\mathbb R\setminus\{0,1,\dots\,n\})=0$.
In particular, $B(1,p)$ is the Bernoulli distribution.

Below we recall the theorem on the binomial convex concentration inequality (see \cite{Shaked2007}).
\begin{theorem}\label{th:Hoeffding}
Let $n\in\N$ , $p_1,\ldots ,p_n \in [0,1]$ and $\overline{p}=\frac{p_1+\dots+p_n}{n}$. Then
$$
B(1,p_1)*\ldots*B(1,p_n) \lcx B(n,\overline{p}).
$$
\end{theorem}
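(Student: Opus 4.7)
My approach combines a two-variable local rebalancing lemma with an iteration driven by majorization.

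The local step I would establish first is: if $X\sim B(1,p)$ and $Y\sim B(1,q)$ are independent, $X'\sim B(1,p')$ and $Y'\sim B(1,q')$ are independent, with $p+q=p'+q'$ and $(p'-q')^2\le(p-q)^2$, then $X+Y\lcx X'+Y'$. Both random variables take values in $\{0,1,2\}$ and have the same mean, so the ordering can be read off from the variances: a direct computation gives $\operatorname{Var}(X+Y)=p+q-p^2-q^2$ and analogously for $X'+Y'$, and the hypothesis (equivalent to $p^2+q^2\ge p'^2+q'^2$ given equal sums) yields $\operatorname{Var}(X+Y)\le\operatorname{Var}(X'+Y')$. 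For equal-mean distributions on $\{0,1,2\}$, a short calculation shows that $\E\varphi$ is a linear function of the variance with coefficient $\tfrac12[\varphi(0)-2\varphi(1)+\varphi(2)]\ge0$ whenever $\varphi$ is convex, so the variance comparison upgrades to the convex ordering.

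To pass from two variables to the full sum, I would fix $i\ne j$ and condition on $(X_k)_{k\ne i,j}$. The convex order is preserved under the addition of an independent random variable, because for each fixed $z$ the shift $x\mapsto\varphi(x+z)$ is still convex. Hence replacing $(p_i,p_j)$ by any rebalanced pair $(p_i',p_j')$ with the same sum and smaller absolute difference increases the distribution of $X_1+\cdots+X_n$ in the convex order.

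Finally, the vector $(\bar p,\ldots,\bar p)$ is majorized by $(p_1,\ldots,p_n)$, so by the Hardy--Littlewood--P\'olya (Muirhead) theorem there is a finite sequence of T-transforms transforming $(p_1,\ldots,p_n)$ into $(\bar p,\ldots,\bar p)$. Each T-transform is precisely a rebalancing covered by the previous step, so iterating yields $B(1,p_1)*\cdots*B(1,p_n)\lcx B(n,\bar p)$. The main obstacle is really just identifying the right local inequality and verifying the $\{0,1,2\}$-supported equivalence between convex order and variance order; once those are in hand, stability of $\lcx$ under convolution and the HLP reduction are standard.
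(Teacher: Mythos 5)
Your proposal is correct and is essentially the paper's own argument: the paper obtains this statement as the special case $\mathbf p'=(\overline p,\dots,\overline p)$ of its generalization, Theorem \ref{th:Hoefbis}, which is proved by exactly your scheme --- reduce the majorization to a finite chain of two-coordinate T-transforms, handle each transform by conditioning on the remaining independent summands, and verify the local two-point inequality for distributions on $\{0,1,2\}$. Your variance reformulation of the local step is the same computation in disguise, since the variance difference equals $2(p'_sp'_t-p_sp_t)$ and your coefficient $\tfrac12\left[\varphi(0)-2\varphi(1)+\varphi(2)\right]$ is precisely the paper's second difference of $f$.
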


In the above theorem $*$ denotes the convolution of probability distributions.
In \cite{MRW2017}, the authors note that the inequality \eqref{eq:Rasa} is equivalent to the following stochastic convex ordering relation
\begin{equation}\label{eq:prop:2b}
  B(n,x)*  B(n,y) \lcx \frac{1}{2} \left[B(n,x)*  B(n,x) +   B(n,y)*  B(n,y) \right].
\end{equation}

To prove \eqref{eq:prop:2b}, the authors proved the following two propositions on convex ordering relations
\begin{proposition}[\cite{MRW2017}]\label{prop:2a}
\begin{equation}\label{eq:prop:2a}
  B(n,x)*  B(n,y) \lcx B\Bigl(2n,\dfrac{x+y}{2}\Bigr).
	\end{equation}
\end{proposition}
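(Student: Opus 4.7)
The plan is to recognize that this proposition is an immediate consequence of Theorem \ref{th:Hoeffding} applied with $2n$ Bernoulli parameters.

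First I would rewrite the left-hand side as a convolution of $2n$ Bernoulli distributions. Since $B(n,x) = B(1,x)^{*n}$ and $B(n,y) = B(1,y)^{*n}$, we have
\[
B(n,x) * B(n,y) = \underbrace{B(1,x) * \cdots * B(1,x)}_{n \text{ times}} * \underbrace{B(1,y) * \cdots * B(1,y)}_{n \text{ times}}.
\]
This is the distribution of a sum of $2n$ independent Bernoulli random variables with parameters $p_1 = \cdots = p_n = x$ and $p_{n+1} = \cdots = p_{2n} = y$.

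Next I would compute the average parameter: $\overline{p} = \frac{nx + ny}{2n} = \frac{x+y}{2}$.

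Finally, applying Theorem \ref{th:Hoeffding} with $n$ replaced by $2n$ and the parameters above yields directly
\[
B(n,x) * B(n,y) = B(1,p_1) * \cdots * B(1,p_{2n}) \lcx B\!\left(2n, \tfrac{x+y}{2}\right),
\]
which is exactly \eqref{eq:prop:2a}. There is essentially no obstacle here; the proposition is a one-line corollary of the binomial convex concentration inequality once one identifies $B(n,\cdot)$ as an $n$-fold convolution of Bernoullis and averages the $2n$ parameters.
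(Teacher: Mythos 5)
Your proof is correct and matches the paper's approach exactly: the paper likewise states that \eqref{eq:prop:2a} follows immediately from Theorem \ref{th:Hoeffding} by writing each $B(n,\cdot)$ as an $n$-fold convolution of Bernoullis and averaging the $2n$ parameters (the same decomposition appears explicitly in the paper's proof of the more general inequality \eqref{eq:6prim}). No issues.
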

\begin{proposition}[\cite{MRW2017}]\label{prop:2}
 \begin{equation}\label{eq:prop:2}
  B\Bigl(2n,\dfrac{x+y}{2}\Bigr)\lcx \frac{1}{2} \left[B(n,x)*  B(n,x) +   B(n,y)*  B(n,y) \right].
 \end{equation}
\end{proposition}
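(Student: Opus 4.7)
The plan is to reduce Proposition~\ref{prop:2} to a one-variable convexity statement, and then to establish that convexity via the shape-preserving property of the Bernstein operators. The first observation is that, since binomial distributions with the same success probability are closed under convolution, we have $B(n,x)\ast B(n,x)=B(2n,x)$ and $B(n,y)\ast B(n,y)=B(2n,y)$, so the right-hand side of \eqref{eq:prop:2} is simply the mixture $\tfrac12[B(2n,x)+B(2n,y)]$. Unwinding the definition of $\lcx$, the task reduces to proving that, for every convex $\varphi\colon\mathbb R\to\mathbb R$,
\[
\E\,\varphi\bigl(B(2n,\tfrac{x+y}{2})\bigr)\ \leq\ \tfrac12\bigl[\E\,\varphi(B(2n,x))+\E\,\varphi(B(2n,y))\bigr].
\]

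Next I would introduce the one-variable function
\[
g(p)=\E\,\varphi(B(2n,p))=\sum_{k=0}^{2n}\binom{2n}{k}p^{k}(1-p)^{2n-k}\varphi(k),\qquad p\in[0,1],
\]
so that the required inequality is precisely $g\bigl(\tfrac{x+y}{2}\bigr)\leq\tfrac12(g(x)+g(y))$, i.e.\ the midpoint convexity of $g$ on $[0,1]$. To verify this I would compute, by the standard two-fold forward-difference manipulation,
\[
g''(p)=2n(2n-1)\sum_{k=0}^{2n-2}\binom{2n-2}{k}p^{k}(1-p)^{2n-2-k}\bigl[\varphi(k+2)-2\varphi(k+1)+\varphi(k)\bigr],
\]
in which each bracketed second difference is nonnegative because $\varphi$ is convex. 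This is precisely the classical fact that Bernstein operators send convex functions to convex polynomials.

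Structurally the argument is therefore fully transparent and there is no real obstacle; the only point worth a moment's care is that convexity of $\varphi$ yields nonnegativity of its second differences without any smoothness assumption, so the second-difference identity for $g''$ is the appropriate tool (rather than, say, a pointwise use of $\varphi''$). Once $g''\geq 0$ on $[0,1]$ is established, ordinary convexity of $g$ immediately gives the required inequality and completes the proof of \eqref{eq:prop:2}.
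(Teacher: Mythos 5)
Your proof is correct. The identity $B(n,x)*B(n,x)=B(2n,x)$ is standard, the reduction to midpoint convexity of $g(p)=\sum_{k}\binom{2n}{k}p^k(1-p)^{2n-k}\varphi(k)$ is exactly right, and your second-derivative formula
$g''(p)=2n(2n-1)\sum_{k=0}^{2n-2}\binom{2n-2}{k}p^{k}(1-p)^{2n-2-k}\bigl[\varphi(k+2)-2\varphi(k+1)+\varphi(k)\bigr]$
is the classical forward-difference identity for Bernstein polynomials; nonnegativity of the second differences needs only convexity of $\varphi$, no smoothness, as you note. However, this is a genuinely different route from the one the paper points to: Proposition~\ref{prop:2} is quoted from \cite{MRW2017}, where it is proved via the Ohlin lemma, i.e.\ by checking that $B\bigl(2n,\tfrac{x+y}{2}\bigr)$ and the mixture $\tfrac12\bigl[B(2n,x)+B(2n,y)\bigr]$ have equal means and that their distribution functions satisfy a single-crossing condition. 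Your argument instead invokes the shape-preserving property of Bernstein operators (convex $\varphi$ implies convex $B_{2n}$-image), which is essentially the same observation the present paper uses in one line to justify the equivalent inequality \eqref{eq:Rasatres} (``since $B_{2n}f$ is convex\dots''). What each approach buys: yours is shorter, self-contained, and avoids the combinatorial verification of the crossing condition; the Ohlin-lemma route is a general-purpose criterion for $\lcx$ that does not depend on the mixture happening to be a mixture of binomials with a common index $2n$, and so adapts more readily to variants where the convolution does not collapse to a single binomial.
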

The inequality \eqref{eq:prop:2a} follows immediately from Theorem \ref{th:Hoeffding} on the binomial convex concentration inequality. In the proof of \eqref{eq:prop:2}, the authors used the Ohlin lemma \cite{Ohl69}.

Ra\c{s}a \cite{Rasa2017b} remarked, that \eqref{eq:Rasa} is equivalent to 
\begin{equation}\label{eq:Rasabis}
 \left(B_{2n}f\right)(x) + \left(B_{2n}f\right)(y)
\geq 2\;\sum_{i=0}^n \sum_{j=0}^n p_{n,i}(x)p_{n,j}(y)\ f\left(\frac{i+j}{2n}\right).
\end{equation}
Since $B_{2n}f$ is convex, we have 
\begin{equation}\label{eq:Rasatres}
 \left(B_{2n}f\right)(x) + \left(B_{2n}f\right)(y)
\geq 2 \left(B_{2n}f\right)\left(\frac{x+y}{2}\right).
\end{equation}
Thus the following problem seems to be a natural one. Prove that 
\begin{equation}\label{eq:Rasa4}
  \left(B_{2n}f\right)\left(\frac{x+y}{2}\right) \geq \sum_{i=0}^n \sum_{j=0}^n p_{n,i}(x)p_{n,j}(y)\ f\left(\frac{i+j}{2n}\right)
\end{equation}
for all convex $f\in\C([0,1])$ and $x,y\in[0,1]$.

If \eqref{eq:Rasa4} is valid, then \eqref{eq:Rasabis} is satisfied,  and hence \eqref{eq:Rasa} is a consequence of \eqref{eq:Rasatres} and \eqref{eq:Rasa4}. Starting from these remarks, Ra\c{s}a \cite{Rasa2017b} presented the inequality \eqref{eq:Rasa4} as an open problem. A very simple probabilistic proof of the inequality \eqref{eq:Rasa4} was given by the authors in the letter to I. Ra\c{s}a
\cite{KomRaj2017}. After that, an analytic proof of \eqref{eq:Rasa4} was given in \cite{AbelRasa2017}.

In this paper, we present the proof of \eqref{eq:Rasa4}, given in the letter to I. Ra\c{s}a \cite{KomRaj2017}, as well as we give generalizations of \eqref{eq:Rasabis}, \eqref{eq:Rasatres}, \eqref{eq:Rasa4} and \eqref{eq:Rasa}. We propose also some generalizations of Theorem \ref{th:Hoeffding} on the binomial convex concentration inequality. Among other, we use it to insert some additional expressions between left and right sides of the Ra\c{s}a inequalities.

\section{Main results}
First we recall a new conjecture of I.\ Ra\c{s}a \cite{Rasa2017b} and present its proof, which we sent in the letter to I. Ra\c{s}a \cite{KomRaj2017}.
\begin{theorem}[new conjecture of I.\ Ra\c{s}a \cite{KomRaj2017}]%\label{th:2.0}
\begin{equation}\label{eq:Rasa4bis}
  \sum_{i=0}^n \sum_{j=0}^n p_{n,i}(x)p_{n,j}(y)\ f\left(\frac{i+j}{2n}\right)\leq
	\left(B_{2n}f\right)\left(\frac{x+y}{2}\right) 
\end{equation}
for all convex functions $f\in\C([0,1])$ and $x,y\in[0,1]$.
\end{theorem}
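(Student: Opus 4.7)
The plan is to rewrite \eqref{eq:Rasa4bis} as a statement about expectations of a convex function applied to two random variables, and then to recognize the resulting inequality as exactly the convex stochastic ordering \eqref{eq:prop:2a} that was already invoked in the introduction.

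First, I would let $X\sim B(n,x)$ and $Y\sim B(n,y)$ be independent, and let $Z\sim B(2n,\frac{x+y}{2})$. A direct calculation using independence identifies the left side of \eqref{eq:Rasa4bis} with $\E f\bigl(\tfrac{X+Y}{2n}\bigr)$, since $X+Y$ takes the value $i+j$ with probability $p_{n,i}(x)p_{n,j}(y)$. The right side rewrites as
$$\left(B_{2n}f\right)\!\left(\tfrac{x+y}{2}\right) = \sum_{k=0}^{2n} p_{2n,k}\!\left(\tfrac{x+y}{2}\right) f\!\left(\tfrac{k}{2n}\right) = \E f\!\left(\tfrac{Z}{2n}\right).$$
Therefore \eqref{eq:Rasa4bis} is equivalent to $\E f\bigl(\tfrac{X+Y}{2n}\bigr) \leq \E f\bigl(\tfrac{Z}{2n}\bigr)$. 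Since $t\mapsto f(t/(2n))$ is convex on $[0,2n]$ whenever $f$ is convex on $[0,1]$, this in turn is equivalent to the convex ordering
$$B(n,x)*B(n,y) \lcx B\!\left(2n,\tfrac{x+y}{2}\right),$$
which is precisely \eqref{eq:prop:2a}.

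To close the loop I would recall, as noted in the excerpt, that \eqref{eq:prop:2a} follows at once from Theorem \ref{th:Hoeffding}: one applies the binomial convex concentration inequality to $2n$ Bernoulli variables with parameters $p_1=\dots=p_n=x$ and $p_{n+1}=\dots=p_{2n}=y$, so that $\overline p=\frac{x+y}{2}$, and observes that $B(1,p_1)*\dots*B(1,p_{2n})$ coincides with $B(n,x)*B(n,y)$.

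Because every step is either a direct probabilistic reinterpretation of sums of binomial probabilities or a citation of a result already stated above, there is no substantive obstacle in the argument. The essential content is the observation that Ra\c{s}a's newly posed conjecture is nothing but the classical binomial convex concentration inequality in probabilistic disguise; identifying this correspondence is the only real step.
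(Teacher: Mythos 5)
Your proposal is correct and follows essentially the same route as the paper: the paper's proof likewise observes that \eqref{eq:Rasa4bis} is exactly the convex ordering $B(n,x)*B(n,y)\lcx B\bigl(2n,\frac{x+y}{2}\bigr)$ of Proposition \ref{prop:2a}, which follows from Theorem \ref{th:Hoeffding}. You merely spell out the probabilistic reinterpretation in more detail than the paper does.
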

\begin{proof}
%[Theorem ]
Note that \eqref{eq:Rasa4bis} can be written in the form
\begin{equation}%\label{eq:prop:2aa}
  B(n,x)*  B(n,y) \lcx B\Bigl(2n,\dfrac{x+y}{2}\Bigr),
	\end{equation}
which was proved in \cite{MRW2017} (see Proposition \ref{prop:2a}).
The theorem is proved.
\end{proof}
\par\bigskip

In the following theorem we give a generalization of the inequalities \eqref{eq:Rasa4bis}, \eqref{eq:Rasatres}, \eqref{eq:Rasabis} and \eqref{eq:Rasa}. 
\begin{theorem}%\label{th:2.1}
Let $n_i \in \mathbb{N}$ for $i=1,\ldots k$ and $\sum_{i=1}^k n_i=m$. Then
\par\medskip
\begin{equation}\label{eq:6prim}
  \sum_{i_1=0}^{n_1}\ldots \sum_{i_k=0}^{n_k}p_{n_1,i_1}(x_1)\ldots p_{n_k,i_k}(x_k) \ f\left(\frac{i_1+\ldots+i_k}m\right)\leq\left(B_{m}f\right)\left(\sum_{i=1}^k\frac{n_i}m\; x_i\right),
\end{equation}
\begin{equation}\label{eq:7}
  \left(B_{m}f\right)\left(\sum_{i=1}^k\frac{n_i}m\; x_i\right)\leq 
	\sum_{i=1}^k\frac{n_i}m\;\left(B_{m}f\right)\left(x_i\right),
\end{equation}
\par\medskip
\begin{equation}\label{eq:8}
  \sum_{i_1=0}^{n_1}\ldots \sum_{i_k=0}^{n_k}p_{n_1,i_1}(x_1)\ldots p_{n_k,i_k}(x_k) \ f\left(\frac{i_1+\ldots+i_k}m\right)\leq 
	\sum_{i=1}^k\frac{n_i}m\;\left(B_mf\right)\left(x_i\right),
\end{equation}
\par\medskip
\begin{equation}\label{eq:8a}
\sum_{i_1=0}^{n_1}\ldots\sum_{i_k=0}^{n_k} p_{n_1,i_1}(x_1)\ldots p_{n_k,i_k}(x_k) \ f\left(\frac{i_1+\ldots+i_k}m\right)\leq 
	\sum_{i=1}^k\frac{n_i}m\;
	\sum_{j=0}^m p_{m,j}(x_i)f\left(\tfrac jm\right)
\end{equation}
\par\medskip
for all convex functions $f\in\C([0,1])$ and $x_1, \ldots, x_k \in [0,1]$, .
\end{theorem}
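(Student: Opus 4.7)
The plan is to handle the four inequalities in sequence, with the first one being the real content and the others following from standard facts. All four inequalities fit naturally into the language of stochastic convex order already used in the excerpt; throughout, I will write $\bar{x}=\sum_{i=1}^k \frac{n_i}{m}x_i$ for brevity.

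For inequality \eqref{eq:6prim}, I would translate it into a statement about binomial convolutions exactly as done for \eqref{eq:Rasa4bis}. Let $S_1,\dots,S_k$ be independent with $S_i\sim B(n_i,x_i)$; then the left-hand side of \eqref{eq:6prim} is $\E g(S_1+\dots+S_k)$, where $g(t)=f(t/m)$ is convex on $\{0,1,\dots,m\}$, and the right-hand side is $\E g(T)$ for $T\sim B(m,\bar{x})$. Since each $B(n_i,x_i)$ is itself an $n_i$-fold convolution of Bernoulli distributions $B(1,x_i)$, the convolution $B(n_1,x_1)*\cdots*B(n_k,x_k)$ equals $B(1,p_1)*\cdots*B(1,p_m)$ for the $m$-tuple $(p_1,\dots,p_m)$ consisting of $n_1$ copies of $x_1$, $n_2$ copies of $x_2$, and so on. The average of these parameters is precisely $\bar{x}$, so Theorem \ref{th:Hoeffding} gives $B(n_1,x_1)*\cdots*B(n_k,x_k)\lcx B(m,\bar{x})$, and applying this to the convex test function $g$ yields \eqref{eq:6prim}.

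For inequality \eqref{eq:7}, I would invoke the classical fact (already implicit in the references to \cite{Lorentz1953}) that the Bernstein operator $B_m$ preserves convexity: if $f\in\C([0,1])$ is convex, then so is $B_mf$. Since $\bar{x}=\sum_{i=1}^k\frac{n_i}{m}x_i$ is a convex combination of $x_1,\dots,x_k\in[0,1]$ with weights $n_i/m\geq 0$ summing to $1$, a single application of Jensen's inequality to the convex function $B_mf$ delivers \eqref{eq:7}.

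Inequality \eqref{eq:8} is then just the chain obtained by combining \eqref{eq:6prim} and \eqref{eq:7}, and inequality \eqref{eq:8a} is literally the same statement as \eqref{eq:8} once one expands $(B_mf)(x_i)=\sum_{j=0}^m p_{m,j}(x_i)f(j/m)$ in the right-hand side of \eqref{eq:8}. I do not anticipate any genuine obstacle: the only nontrivial input is Theorem \ref{th:Hoeffding}, which is already available, and the convexity-preserving property of $B_m$, which is classical. The mild care needed is just the bookkeeping that identifies $B(n_1,x_1)*\cdots*B(n_k,x_k)$ with an $m$-fold Bernoulli convolution whose mean parameter is $\bar{x}$.
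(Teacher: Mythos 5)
Your proposal is correct and follows essentially the same route as the paper: identify $B(n_1,x_1)*\cdots*B(n_k,x_k)$ with an $m$-fold Bernoulli convolution whose parameters average to $\bar{x}$, apply Theorem \ref{th:Hoeffding} to get \eqref{eq:6prim}, use convexity of $B_mf$ for \eqref{eq:7}, and chain these for \eqref{eq:8} and \eqref{eq:8a}. No discrepancies to report.
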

\begin{proof}
To prove \eqref{eq:6prim}, using the well-known characterization of binomial distributions, we have that for every  $i=1,\ldots, k$ 
$$
B(n_i,x_i)= \left[B(1,x_i)\right]^{*n_i}=B(1,x_i)*\ldots*B(1,x_i),
$$
which implies
$$
B(n_1,x_1)*\ldots*B(n_k,x_k)=\left[B(1,x_1)\right]^{*n_1}*\ldots*\left[ B(1,x_k)\right]^{*n_k}.
$$
Then by Theorem \ref{th:Hoeffding}, we conclude that
\begin{equation}\label{eq:6a}
 B(n_1,x_1)*\ldots*B(n_k,x_k) \lcx  B(m,\overline{p}),
\end{equation}
where $\overline{p}=\frac{\sum_{i=1}^k n_i\; x_i}{m}=\sum_{i=1}^k\frac{ n_i}{m}\; x_i.$
Since \eqref{eq:6a} is equivalent to \eqref{eq:6prim}, the inequality  \eqref{eq:6prim} is proved.

The inequality \eqref{eq:7} follows immediately from the convexity of $B_{m}f$.
In turn, the inequality \eqref{eq:8} is an immediate consequence of \eqref{eq:6prim} and \eqref{eq:7}, and the inequality \eqref{eq:8a} follows from \eqref{eq:8}.
The theorem is proved.
\end{proof}

\par\bigskip

In the set of all the $m$-tuples $\mathbf p=(p_1,\dots,p_m)\in\mathbb R^m$ we consider the following quasiorder. 
\begin{definition}
We say that $\mathbf q$ majorizes $\mathbf p$ (denoted by $\mathbf p\prec \mathbf q$ or $\mathbf q\succ\mathbf p$) if
\begin{enumerate}[\upshape(i)]
\item $\sum_{l=1}^m\widehat p_l=\sum_{l=1}^m\widehat q_l$,
\item $\sum_{l=1}^k\widehat p_l\leq\sum_{l=1}^k\widehat q_l$ for $k=1,\dots,m$,
\end{enumerate}
where $\widehat p_1\geq\dots\geq\widehat p_m$ and $\widehat q_1\geq\dots\geq\widehat q_m$ are nonincreasing permutations of $\mathbf p$ and $\mathbf q$, respectively.
\end{definition}
The majorization has been studied in \cite{Hardy1952} (before Theorem 45),
\cite{MarshallOlkin2011}, and many other sources.
\par\bigskip

In the next theorem we give a generalization of the binomial convex concentration inequality.
\begin{theorem}\label{th:Hoefbis}
Let $\mathbf p=(p_1,\dots,p_m)\in[0,1]^m$ and $\mathbf p'=(p'_1,\dots,p'_m)\in[0,1]^m$
be such that $\mathbf p$ majorizes $\mathbf p'$ (i.e. $\mathbf p\succ\mathbf p'$).
Then
\begin{equation}\label{eq:Hn}
B(1,p_1)*\dots*B(1,p_m)\leq_{cx}B(1,p'_1)*\dots*B(1,p'_m).
\end{equation}
\end{theorem}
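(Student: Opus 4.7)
The plan is to reduce to a single elementary step via the classical decomposition of majorization into T-transforms (Robin-Hood transfers), and then verify the elementary case by an explicit computation on three atoms.

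First, recall the Hardy-Littlewood-P\'olya characterization (see \cite{MarshallOlkin2011}): $\mathbf p\succ\mathbf p'$ if and only if $\mathbf p'$ is obtainable from $\mathbf p$ by a finite sequence of transfers, each of the form $(p_i,p_j)\mapsto(p_i-t,p_j+t)$ with $p_i\geq p_j$ and $0\leq t\leq(p_i-p_j)/2$, all other coordinates being left unchanged. Since the convex stochastic order is preserved under independent convolutions (if $U\leq_{cx}V$ and $W$ is independent of both, then conditioning on $W$ and applying the order pointwise yields $U+W\leq_{cx}V+W$), it is enough to prove \eqref{eq:Hn} in the case where $\mathbf p$ and $\mathbf p'$ differ by a single T-transform. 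Indeed, the remaining $m-2$ Bernoulli factors form a common independent summand on both sides. Iterating over the finite sequence of T-transforms furnished by the characterization then gives the full result.

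So the task reduces to showing: if $a\geq b$ and $0\leq t\leq(a-b)/2$, and $a'=a-t$, $b'=b+t$, then
\begin{equation*}
B(1,a)*B(1,b)\leq_{cx}B(1,a')*B(1,b').
\end{equation*}
Both sides are distributions on $\{0,1,2\}$ with the common mean $a+b=a'+b'=:\mu$. Any such distribution $\nu$ is completely determined by the single number $\nu(\{2\})$, since $\nu(\{1\})=\mu-2\nu(\{2\})$ and $\nu(\{0\})=1-\mu+\nu(\{2\})$. A direct expansion gives
\begin{equation*}
\int\varphi\,d\nu=(1-\mu)\varphi(0)+\mu\varphi(1)+\nu(\{2\})\bigl(\varphi(0)-2\varphi(1)+\varphi(2)\bigr),
\end{equation*}
and the bracket is nonnegative for every convex $\varphi$. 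Hence, among distributions on $\{0,1,2\}$ with fixed mean $\mu$, the convex order reduces to ordering the mass at $2$. For our two distributions these masses are $ab$ and $a'b'=(a-t)(b+t)=ab+t(a-b-t)$, and $t(a-b-t)\geq0$ follows from $0\leq t\leq(a-b)/2\leq a-b$. This establishes the elementary case, and the theorem follows.

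The only structural ingredient is the T-transform decomposition of majorization, which is textbook material; the rest is an elementary three-point computation, so I do not anticipate any serious obstacle in carrying out the details.
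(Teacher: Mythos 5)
Your proposal is correct and follows essentially the same route as the paper: both reduce, via the Hardy--Littlewood--P\'olya decomposition of majorization into elementary two-coordinate transfers, to comparing $B(1,a)*B(1,b)$ with $B(1,a')*B(1,b')$, and both verify this case by observing that the two laws share their mean and differ only in the mass at $2$, namely $ab$ versus $a'b'\geq ab$ (the paper phrases this as the coefficient $2(p'_sp'_t-p_sp_t)\geq 0$ multiplying a second difference of $f$, after conditioning on the remaining coordinates rather than invoking closure of $\leq_{\text{cx}}$ under independent convolution, but these are the same computation). No gaps.
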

\begin{remark}
Intuitively, Theorem \ref{th:Hoefbis} says that if $\mathbf p'$ is more concentrated than $\mathbf p$ ($\mathbf p\succ\mathbf p'$),
then $B(1,p_1)*\dots*B(1,p_m)$ is more concentrated than $B(1,p'_1)*\dots*B(1,p'_m)$.
\end{remark}
\begin{proof}
Let $\mathbf p\succ\mathbf p'$. We need to show that for each convex function $f:\mathbb R\to\mathbb R$ (or
$f:[0,n]\to\mathbb R$) we have
$$\E f\left(\sum_{i=1}^mX_i\right)\leq \E f\left(\sum_{i=1}^mX'_i\right),$$
where $X_1,\dots,X_m$ and $X'_1,\dots,X'_m$ are independent random variables such that $X_i\sim B(1,p_i)$
and $X'_i\sim B(1,p'_i)$ for each $i=1,\dots,m$.

Since $\mathbf p'$ is majorized by $\mathbf p$, we may fix $\mathbf p^0,\mathbf p^1,\mathbf p^2,\dots,$ $\mathbf p^k\in[0,1]^m$
such that $\mathbf p^0\succ\dots\succ\mathbf p^k$, $\mathbf p^0$ is a permutation of $\mathbf p$,
$\mathbf p^k$ is a permutation of $\mathbf p'$, and such that  
for every $l=1,\dots,k$ there exist $s,t\in\{1,\dots,m\}$ such that
$p^l_i=p^{l-1}_i$ if $i\notin\{s,t\}$,
$p^l_s+p^l_t=p^{l-1}_s+p^{l-1}_t$, and
$p^l_s$ and $p^l_t$ are located between $p^{l-1}_s$ and $p^{l-1}_t$.
In other words $\mathbf p^l$ is constructed from $\mathbf p^{l-1}$ by changing
just two of its terms (making the values of these terms closer).

Because of transitivity of the relation $\leq_{cx}$
it is enough to show \eqref{eq:Hn} for $\mathbf p=\mathbf p^{l-1}$ and $\mathbf p'=\mathbf p^l$, $l=1,\dots,k$. 
Let $s,t\in\{1,\dots,m\}$ be such that $p'_i=p_i$ if $i\notin\{s,t\}$,
$p_s\geq p'_s\geq p'_t\geq p_t$, and $p'_s+p'_t=p_s+p_t$.

Let $X_1,\dots,X_m$ be independent random variables such that $X_i\sim B(1,p_i)$, $i=1,\dots,n$. For $i\notin\{s,t\}$ we define $X'_i=X_i$, and let $X'_s$, $X'_t$ be independent, and independent on $X'_i$, $i\notin\{s,t\}$ and such that $X'_s\sim B(1,p'_s)$, $X'_t\sim B(1,p'_t)$.

Assume that $f:\mathbb R\to\mathbb R$ (or $f:[0,n]\to\mathbb R$)
is a convex function. 
Then we have the equality
\begin{multline*}
\E f\left(\sum_{i=1}^mX'_i\right)=
\E \left((1-p'_s)(1-p'_t)\cdot f\left(\sum_{i\neq s,t}X_i\right)+\right.
\\
\left.((1-p'_s)p'_t+p'_s(1-p'_t))\cdot f\left(1+\sum_{i\neq s,t}X_i\right)
+p'_sp'_t\cdot f\left(2+\sum_{i\neq s,t}X_i\right)\right),\\
\end{multline*}
and similarly for $\E f\left(\sum_{i=1}^mX_i\right)$. It follows that
\begin{multline*}
\E f\left(\sum_{i=1}^mX'_i\right)-\E f\left(\sum_{i=1}^mX_i\right)=
\\
\E \Bigg(((1-p'_s)(1-p'_t)-(1-p_s)(1-p_t))\cdot f\left(\sum_{i\neq s,t}X_i\right)+
((1-p'_s)p'_t+p'_s(1-p'_t)-\\
(1-p_s)p_t+p_s(1-p_t))\cdot f\left(1+\sum_{i\neq s,t}X_i\right)+
(p'_sp'_t-p_sp_t)\cdot f\left(2+\sum_{i\neq s,t}X_i\right)\Bigg)=\\
\E \left(2(p'_sp'_t-p_sp_t)\cdot\left(\frac12f\left(\sum_{i\neq s,t}X_i\right)+\frac12f\left(2+\sum_{i\neq s,t}X_i\right)-f\left(1+\sum_{i\neq s,t}X_i\right)\right)\right)\geq0.
\end{multline*}
The last inequality follows from the fact that 
$$2(p'_sp'_t-p_sp_t)=\frac{(p_s-p_t)^2-(p'_s-p'_t)^2}2\geq0,$$
and the non-negativity of 
$$\frac12f\left(\sum_{i\neq s,t}X_i\right)+\frac12f\left(2+\sum_{i\neq s,t}X_i\right)-f\left(1+\sum_{i\neq s,t}X_i\right)$$
follows from the convexity of the function $f$. The theorem is proved.
\end{proof}
%%%%%%%%%%%%%%%%%%%%%%%%%%%%%%%%%%%%%%%%%%%%%%%%%%%%%%%%%%%%%%%%%%%%%%

\begin{remark}
Taking in the above theorem $\mathbf p'=(\overline{p},\dots,\overline{p})$, where $\overline{p}=\frac1m\sum_{i=1}^mp_i$,
we obtain the inequality
$B(1,p_1)*\dots*B(1,p_m)\leq_{cx}B(m,\overline{p})$, i.e. the binomial convex concentration inequality given in Theorem \ref{th:Hoeffding}.
\end{remark}

%%%%%%%%%%%%%%%%%%%%%%%%%%%%%%%%%%%%%%%%%%%%%%%%%%%%%%%%%%%%%%%%%%%%%%%%%
In the following example we show that the condition $\mathbf p\succ\mathbf p'$ in Theorem \ref{th:Hoefbis} is sufficient but it is not necessary.
\begin{example}
%\rm
Let $\mathbf p=(\frac34,\frac34,0)$ and $\mathbf p'=(\frac56,\frac12,\frac16)$.
We have $p_1+p_2+p_3=p'_1+p'_2+p'_3$ but $\mathbf p\succ\mathbf p'$ is not satisfied
(because $p_1=\max(p_1,p_2,p_3)$ is smaller than $p'_1=\max(p'_1,p'_2,p'_3)$). On the other hand
$B(1,p_1)*B(1,p_2)*B(1,p_3)\leq_{cx}B(1,p'_1)*B(1,p'_2)*B(1,p'_3)$.
Indeed, we have
$\mu:=B(1,p_1)*B(1,p_2)*B(1,p_3)=\frac1{16}\delta_0+\frac38\delta_1+\frac9{16}\delta_2$
and
$\nu:=B(1,p'_1)*B(1,p'_2)*B(1,p'_3)=\frac5{72}\delta_0+\frac{31}{72}\delta_1+\frac{31}{72}\delta_2+\frac5{72}\delta_3$. Then by the Jensen inequality
\begin{multline*}
\int f\ d\nu-\int f\ d\mu=\bigl(\tfrac5{72}f(0)+\tfrac{31}{72}f(1)+\tfrac{31}{72}f(2)+\tfrac5{72}f(3)\bigr)-\\
\bigl(\tfrac1{16}f(0)+\tfrac38f(1)+\tfrac9{16}f(2)\bigr)=
\tfrac{19}{144}\cdot\bigl(\tfrac1{19}f(0)+\tfrac8{19}f(1)+\tfrac{10}{19}f(3)-f(2)\bigr)\geq 0.
\end{multline*}
for each convex functions $f:\mathbb R\to\mathbb R$ (or $f:[0,3]\to\mathbb R$).
\end{example}
%%%%%%%%%%%%%%%%%%%%%%%%%%%%%%%%%%%%%%%%%%%%%%%%%%%%%%%%%%%%%%%%%%%%%%

The following example shows that it is not true that the condition $\mathbf p\succ\mathbf p'$ can be weakened by replacing it with the conditions $\overline p=\overline{p'}$ and $\sum_{i=1}^m(p_i-\overline p)^2\geq\sum_{i=1}^m(p'_i-\overline{p'})^2$
($\overline p=\frac1m\sum_{i=1}^mp_i$ and $\overline{p'}=\frac1m\sum_{i=1}^mp'_i$), i.e. $\mathbb E(\mathbf p)=\mathbb E(\mathbf p')$ and $Var(\mathbf p)\geq Var(\mathbf p')$.

\begin{example}\rm
Let $\mathbf p=(1,\frac12,\frac12,0)$ and $\mathbf p'=(\frac56,\frac56,\frac16,\frac16)$.
Then we have $\overline p=\frac12=\overline{p'}$ and
$\sum_{i=1}^m(p_i-\overline p)^2=\frac12>\frac49=\sum_{i=1}^m(p'_i-\overline{p'})^2$.
We have also
$\mu:=B(1,p_1)*\dots*B(1,p_4)=\frac14\delta_1+\frac12\delta_2+\frac14\delta_3$
and
$\nu:=B(1,p'_1)*\dots*B(1,p'_4)=\frac{25}{6^4}\delta_0+\frac{260}{6^4}\delta_1+\frac{726}{6^4}\delta_2+\frac{260}{6^4}\delta_3+\frac{25}{6^4}\delta_4$,
which implies that for the convex function $f(x)=|x-2|$ we obtain
$\int f\ d\mu=\frac12>\frac{155}{324}=\int f\ d\nu$, which contradicts the convex ordering relation $\mu\leq_{cx}\nu$.
\end{example}
%%%%%%%%%%%%%%%%%%%%%%%%%%%%%%%%%%%%%%%%%%%%%%%%%%%%%%%%%%%%%%%%%%%%%%%%
In the following theorem we give the conditions, which are equivalent to \eqref{eq:Hn}, but this characterization seems completely impractical
(therefore we skip the proof).
\begin{theorem}
Let $\mathbf p=(p_1,\dots,p_m)\in[0,1]^m$ and $\mathbf p'=(p'_1,\dots,p'_m)\in[0,1]^m$.
The following conditions are equivalent:
\begin{enumerate}[\upshape(i)]
\item $B(1,p_1)*\dots*B(1,p_m)\leq_{cx}B(1,p'_1)*\dots*B(1,p'_m)$,
\item $\sigma_1(\mathbf p)=\sigma_1(\mathbf p')$ and
$\forall_{k=2,3,\dots,m}\ \sum_{j=k}^m(-1)^{j-k}\binom{j-2}{k-2}(\sigma_j(\mathbf p')-\sigma_j(\mathbf p))\geq0$,
\end{enumerate}
where $\sigma_1,\dots,\sigma_m$ are symmetric polynomials of  $m$ variables, i.e.
$$\sigma_j(x_1,\dots,x_m)=\sum_{A\subset\{1,\dots,m\},\ |A|=j}\prod_{i\in A}x_i.$$
\end{theorem}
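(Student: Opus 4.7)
The plan is to translate the convex ordering into inequalities among factorial moments. Let $X_1,\ldots,X_m$ and $X'_1,\ldots,X'_m$ be independent Bernoulli variables with $X_i\sim B(1,p_i)$ and $X'_i\sim B(1,p'_i)$, and put $X=\sum_i X_i$, $X'=\sum_i X'_i$. Because each $X_i\in\{0,1\}$, the combinatorial identity $\binom{X}{j}=\sum_{|A|=j}\prod_{i\in A}X_i$ together with independence gives $\E\binom{X}{j}=\sigma_j(\mathbf p)$, and analogously $\E\binom{X'}{j}=\sigma_j(\mathbf p')$. In particular $\sigma_1(\mathbf p)=\E X$; since the affine functions $\pm x$ are convex, condition (i) forces $\E X=\E X'$, so the equality $\sigma_1(\mathbf p)=\sigma_1(\mathbf p')$ in (ii) is exactly the equality of means.

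For the remaining inequalities I would invoke the standard characterization of $\leq_{cx}$ for distributions supported in $\{0,1,\ldots,m\}$: $\mu\leq_{cx}\nu$ if and only if the means agree and $\E(X-l)_+\leq\E(X'-l)_+$ for $l=1,\ldots,m-1$. This holds because every convex function on $\{0,1,\ldots,m\}$ is an affine function plus a nonnegative combination of the hockey-sticks $(x-l)_+$, $l=1,\ldots,m-1$, which span the extreme rays of the cone of convex sequences modulo affine (their discrete second differences are supported on a single point). Setting $l=k-1$, the theorem reduces to proving
\[\E(X-k+1)_+=\sum_{j=k}^m(-1)^{j-k}\binom{j-2}{k-2}\sigma_j(\mathbf p)\qquad(k=2,\ldots,m),\]
since subtracting the corresponding identity for $X'$ then produces the inequality displayed in (ii).

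The heart of the argument is therefore the pointwise identity
\[(x-k+1)_+=\sum_{j=k}^m(-1)^{j-k}\binom{j-2}{k-2}\binom{x}{j}\qquad\text{for }x\in\{0,1,\ldots,m\}.\]
I would prove it via the Newton forward-difference expansion $f(x)=\sum_{j=0}^m\binom{x}{j}\Delta^jf(0)$, applied to $f(x)=(x-k+1)_+$; since $f$ vanishes on $\{0,\ldots,k-1\}$, one immediately gets $\Delta^jf(0)=0$ for $j<k$. The main obstacle is to show $\Delta^jf(0)=(-1)^{j-k}\binom{j-2}{k-2}$ for $j\geq k$. Writing $\Delta^jf(0)=\sum_{i=k-1}^j(-1)^{j-i}\binom{j}{i}(i-k+1)$, decomposing $(i-k+1)=i-(k-1)$ and using $i\binom{j}{i}=j\binom{j-1}{i-1}$ reduces both pieces to alternating partial binomial sums, which evaluate by $\sum_{l=0}^N(-1)^l\binom{n}{l}=(-1)^N\binom{n-1}{N}$; a Pascal-type simplification then produces the claimed coefficient. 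Multiplying the identity by $P(X=x)$ and summing over $x$ yields the formula for $\E(X-k+1)_+$, and both directions of the theorem follow at once.
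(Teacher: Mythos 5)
The paper gives no proof of this theorem --- the authors explicitly write that the characterization ``seems completely impractical (therefore we skip the proof)'' --- so there is nothing to compare your argument against; it must be judged on its own. It is correct and complete in outline. The reduction is the natural one: $\E\binom{X}{j}=\sigma_j(\mathbf p)$ follows from $\binom{X}{j}=\sum_{|A|=j}\prod_{i\in A}X_i$ for $0$--$1$ variables, and the stop-loss characterization of $\lcx$ on $\{0,\dots,m\}$ (equal means plus $\E(X-l)_+\leq\E(X'-l)_+$ for $l=1,\dots,m-1$) is standard and correctly justified via the hockey-stick generators of the cone of convex sequences modulo affine functions; checking only integer $l$ suffices because $t\mapsto\E(X-t)_+$ is piecewise linear between integers. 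The one step you leave as an ``obstacle'' does go through: with $f(x)=(x-k+1)_+$ one gets
$$\Delta^jf(0)=(-1)^{j-k}\Bigl((k-1)\tbinom{j-1}{k-2}-j\tbinom{j-2}{k-3}\Bigr),$$
and Pascal's rule together with $(j-k+1)\binom{j-2}{k-3}=(k-2)\binom{j-2}{k-2}$ collapses the bracket to $\binom{j-2}{k-2}$, giving the pointwise identity $(x-k+1)_+=\sum_{j=k}^m(-1)^{j-k}\binom{j-2}{k-2}\binom{x}{j}$ on $\{0,\dots,m\}$ via the Newton expansion. Taking expectations and setting $l=k-1$ yields exactly condition (ii), and both implications follow. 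So your proposal actually supplies a proof the paper chose to omit.
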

%We omit the proof of the above theorem and leave it to the reader.
\par\bigskip

%%%%%%%%%%%%%%%%%%%%%%%%%%%%%%%%%%%%%%%%%%%%%%%%%%%%%%%%%%%%%%%%%%%%%%%%%%
In the following theorem we give a generalization of Proposition \ref{prop:2}.
\begin{theorem}
Let $\mathbf p=(p_1,\dots,p_m)\in[0,1]^m$ and $\mathbf p'=(p'_1,\dots,p'_m)\in[0,1]^m$
be such that $\mathbf p\succ\mathbf p'$.
Then 
$$\sum_{i=1}^m(B_nf)(p_i)\geq \sum_{i=1}^m(B_nf)(p'_i)$$
for all convex functions $f:[0,1]\to\mathbb R$.
\end{theorem}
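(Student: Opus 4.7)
The plan is to reduce the claim to the classical Hardy--Littlewood--P\'olya majorization inequality applied to the convex function $B_nf$. Concretely, I would proceed in three short steps.

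First, I would invoke the well-known fact that the Bernstein operator preserves convexity: if $f\in\C([0,1])$ is convex, then $B_nf$ is convex on $[0,1]$ as well (see, e.g., Lorentz \cite{Lorentz1953}; it also follows from the probabilistic representation $(B_nf)(p)=\E f(S_p/n)$ with $S_p\sim B(n,p)$ and the binomial convex concentration inequality applied in the parameter $p$).

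Second, I would apply the Hardy--Littlewood--P\'olya characterization of majorization: for any convex function $g:[0,1]\to\mathbb R$ and any $\mathbf p,\mathbf p'\in[0,1]^m$ with $\mathbf p\succ\mathbf p'$, one has $\sum_{i=1}^m g(p_i)\geq\sum_{i=1}^m g(p'_i)$. Setting $g=B_nf$, which is convex by Step~1, gives exactly the desired inequality.

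If a self-contained argument were preferred over citing Hardy--Littlewood--P\'olya, I would mimic the reduction used in the proof of Theorem~\ref{th:Hoefbis}: fix a chain $\mathbf p=\mathbf p^0\succ\mathbf p^1\succ\dots\succ\mathbf p^k=\mathbf p'$ of $T$-transforms, where each $\mathbf p^l$ differs from $\mathbf p^{l-1}$ in only two coordinates $s,t$ satisfying $p^l_s+p^l_t=p^{l-1}_s+p^{l-1}_t$ with $p^l_s,p^l_t$ located between $p^{l-1}_s,p^{l-1}_t$. For such an elementary transfer, the inequality $(B_nf)(p^{l-1}_s)+(B_nf)(p^{l-1}_t)\geq (B_nf)(p^l_s)+(B_nf)(p^l_t)$ is immediate from convexity of $B_nf$ (the two pairs have the same midpoint and the first pair is more spread out), while all other coordinates cancel. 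Telescoping along the chain yields the full statement. The only nontrivial ingredient in either approach is the convexity-preservation property of $B_n$, which is really the main obstacle worth flagging, but it is standard.
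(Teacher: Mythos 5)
Your proposal is correct and follows essentially the same route as the paper: the paper's proof likewise observes that $B_nf$ is convex and then applies the Hardy--Littlewood--P\'olya inequality (Theorem 108 in \cite{Hardy1952}). Your optional self-contained argument via elementary two-coordinate transfers is a fine expansion of the same idea but is not needed.
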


\begin{proof}
Since the function $B_nf:[0,1]\to\mathbb R$ is convex, the theorem follows immediately from the Hardy-Littlewood-P\'olya inequality (\cite{Hardy1952}, Theorem 108).
\end{proof}
\par\bigskip
%%%%%%%%%%%%%%%%%%%%%%%%%%%%%%%%%%%%%%%%%%%%%%%%%%%%%%%%%%%%%%%%%%%%%%%%%%%%
In the next theorem we use Theorem \ref{th:Hoefbis} and the Jensen inequality to insert some additional expressions between left and right sides of the Ra\c{s}a inequalities \eqref{eq:6prim} and \eqref{eq:7}, respectively.
%%%%%%%%%%%%%%%%%%%%%%%%%%%%%%%%%%%%%%%%%%%%%%%%%%%%%%%%%%%%%%%%%%%%%%
\begin{theorem}%\label{th:2.5}
Let $k \in \mathbb{N}$, $n_i \in \mathbb{N}$ and $x_i \in [0,1]$  for $i=1,\ldots k$. Let 
$$\sum_{i=1}^k n_i=m,\quad\widetilde{n_i}=n_1+\ldots+n_i\quad\text{and}\quad\widetilde{x_i}=\left(\widetilde{n_i}\right)^{-1}(n_1x_1+\ldots+ n_ix_i)\quad\text{for}\quad i=1,\ldots k.$$ 
Then for all convex functions $f\in\C([0,1])$ and $j=3,\ldots k-1$
%\begin{enumerate}[\upshape(i)]\item
\begin{multline}\label{eq:200}
  \sum_{i_1=0}^{n_1}\ldots\sum_{i_k=0}^{n_k}p_{n_1,i_1}(x_1)\ldots p_{n_k,i_k}(x_k) \ f\left(\frac{i_1+\ldots+i_k}m\right)\leq\\
	\sum_{i_{j-1}=0}^{\widetilde{n_{j-1}}}\sum_{i_j=0}^{n_j}\ldots\sum_{i_k=0}^{n_k}p_{\widetilde{n_{j-1}},i_{j-1}}(\widetilde{x_{j-1}})p_{n_j,i_j}(x_j)\ldots p_{n_k,i_k}(x_k) \ f\left(\frac{i_{j-1}+\ldots+i_k}m\right)\leq\\
	\sum_{i_j=0}^{\widetilde{n_j}}\sum_{i_{j+1}=0}^{n_{j+1}}\ldots\sum_{i_k=0}^{n_k}p_{\widetilde{n_j},i_j}(\widetilde{x_j})p_{n_{j+1},i_{j+1}}(x_{j+1})\ldots p_{n_k,i_k}(x_k) \ f\left(\frac{i_j+\ldots+i_k}m\right)\leq \ldots \leq
		\left(B_{m}f\right)\left(\sum_{i=1}^k\frac{ n_i}{m}\; x_i\right), 
\end{multline}
%\item
\begin{multline}\label{eq:201}
  \left(B_{m}f\right)\left(\sum_{i=1}^k\frac{n_i}m\; x_i\right)\leq\ldots\leq 
	\frac{\widetilde{n_j}}m\;\left(B_m f\right)\left(\widetilde{x_j}\right)+\frac{n_{j+1}}m\;\left(B_m f\right)\left(x_{j+1}\right)+\ldots+\frac{n_k}m\;\left(B_m f\right)\left(x_k\right)\leq\\
	\frac{\widetilde{n_{j-1}}}m\;\left(B_m f\right)\left(\widetilde{x_{j-1}}\right)+\frac{n_j}m\;\left(B_{m}f\right)\left(x_j\right)+\ldots+\frac{n_k}m\;\left(B_m f\right)\left(x_k\right)\leq\ldots\leq
	\sum_{i=1}^k\frac{n_i}m\;\left(B_m f\right)\left(x_i\right).
\end{multline}
%\end{enumerate}
\end{theorem}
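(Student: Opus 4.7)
The plan is to treat the two chains \eqref{eq:200} and \eqref{eq:201} separately, reducing each to a single elementary step which is then iterated along the index $j$.

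For \eqref{eq:200}, I would re-express the $j$th entry of the chain in probabilistic language as $\E f(S_j/m)$, where $S_j$ has distribution
\[
\mu_j := B(\widetilde{n_j},\widetilde{x_j}) * B(n_{j+1},x_{j+1}) * \cdots * B(n_k,x_k),
\]
with the conventions $\widetilde{n_1}=n_1$, $\widetilde{x_1}=x_1$ (so that $\mu_1$ is the left-hand side of \eqref{eq:200}) and $\mu_k=B(m,\widetilde{x_k})$ (so that $\E f(S_k/m)$ equals $(B_m f)(\sum_i \tfrac{n_i}{m} x_i)$, the right-hand side). The full chain will then follow by transitivity from the single-step claim $\mu_{j-1}\lcx\mu_j$ for $j=2,\dots,k$, applied to the convex function $x\mapsto f(x/m)$. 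To prove that single step, I observe that the $\widetilde{n_j}$-tuple of Bernoulli parameters consisting of $\widetilde{n_{j-1}}$ copies of $\widetilde{x_{j-1}}$ together with $n_j$ copies of $x_j$ has arithmetic mean exactly $\widetilde{x_j}$, so Theorem~\ref{th:Hoeffding} directly yields
\[
B(\widetilde{n_{j-1}},\widetilde{x_{j-1}}) * B(n_j,x_j) \lcx B(\widetilde{n_j},\widetilde{x_j}).
\]
Convolving both sides with the fixed distribution $B(n_{j+1},x_{j+1}) * \cdots * B(n_k,x_k)$ preserves $\lcx$, and this gives $\mu_{j-1}\lcx\mu_j$.

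For \eqref{eq:201}, the key remark is that the definition of $\widetilde{x_j}$ rewrites as the convex combination
\[
\widetilde{x_j} \;=\; \frac{\widetilde{n_{j-1}}}{\widetilde{n_j}}\,\widetilde{x_{j-1}} + \frac{n_j}{\widetilde{n_j}}\,x_j.
\]
Since $B_m f$ is convex on $[0,1]$, Jensen's inequality gives
\[
(B_m f)(\widetilde{x_j}) \;\leq\; \frac{\widetilde{n_{j-1}}}{\widetilde{n_j}}\,(B_m f)(\widetilde{x_{j-1}}) + \frac{n_j}{\widetilde{n_j}}\,(B_m f)(x_j).
\]
Multiplying by $\widetilde{n_j}/m$ and adding the unchanged tail $\sum_{i=j+1}^k \tfrac{n_i}{m}(B_m f)(x_i)$ to both sides converts the $j$th entry of \eqref{eq:201} into its $(j-1)$th entry, and iteration delivers the whole chain.

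I do not expect a genuine obstacle: both chains telescope, and each individual link is handled by a single well-known tool — Theorem~\ref{th:Hoeffding} in the first case and the convexity of $B_m f$ in the second. The one technical point worth a line of justification is the stability of $\lcx$ under independent convolution: for any convex $g$ and fixed distribution $\nu$, the function $y\mapsto \int g(x+y)\,\nu(dx)$ is again convex, and integrating a $\lcx$-inequality against this function gives $\int g\,d(\mu_1*\nu)\leq\int g\,d(\mu_2*\nu)$ whenever $\mu_1\lcx\mu_2$. Everything else is bookkeeping.
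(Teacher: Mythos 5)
Your proof is correct, and its overall architecture is the same as the paper's: both chains are obtained by telescoping a single merging step along $j$, with \eqref{eq:200} handled by convex stochastic ordering of convolutions of binomials and \eqref{eq:201} by convexity of $B_mf$ via the identity $\widetilde{x_j}=\frac{\widetilde{n_{j-1}}}{\widetilde{n_j}}\widetilde{x_{j-1}}+\frac{n_j}{\widetilde{n_j}}x_j$ (which the paper leaves implicit with ``it is not difficult to prove''). The one substantive difference is in how each link of \eqref{eq:200} is certified: the paper writes the entire convolution as a product of $m$ Bernoulli factors and applies its majorization-based Theorem~\ref{th:Hoefbis} to the full $m$-tuples of parameters (the merged tuple being majorized by the unmerged one), whereas you apply only the classical Theorem~\ref{th:Hoeffding} to the two blocks being merged and then separately invoke the stability of $\lcx$ under convolution with a fixed independent factor --- a fact you correctly justify by noting that $y\mapsto\int g(x+y)\,\nu(dx)$ is convex for convex $g$. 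Your route is marginally more elementary, since it bypasses the majorization machinery entirely; the paper's route packages the ``fixed remaining factors'' issue inside Theorem~\ref{th:Hoefbis} and so needs no separate convolution-stability lemma. Both are complete; your verification that the merged block's parameter mean is exactly $\widetilde{x_j}$ (because $\widetilde{n_{j-1}}\widetilde{x_{j-1}}+n_jx_j=\widetilde{n_j}\widetilde{x_j}$) is the same computation the paper needs for its majorization claim.
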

\begin{proof} Put 
$$
\overline{x}=\widetilde{x_k}=\sum_{i=1}^k\frac{n_i}m\; x_i.
$$
To prove \eqref{eq:200}, we take into account the following equalities 
$$B(n_i,x_i)= \left[ B(1,x_i) \right]^{*n_i}, \quad B(\widetilde{n_i},\widetilde{x_i})= \left[ B(1,\widetilde{x_i}) \right]^{*\widetilde{n_i}}, \quad i=1,\ldots, k.$$
Then by Theorem \ref{th:Hoefbis}, we obtain the following convex ordering relations
\begin{equation*}
B(n_1,x_1)*\ldots*B(n_k,x_k) \lcx B(\widetilde{n_2},\widetilde{x_2})*B(n_3,x_3)*\ldots*B(n_k,x_k) \lcx \ldots \lcx B(m,\overline{x})  ,
\end{equation*}
which are equivalent to the inequalities \eqref{eq:200}.

It is not difficult to prove, that by the convexity of $B_{m}f$, the inequalities \eqref{eq:201} follow immediately from the Jensen inequality. The theorem is proved.
\end{proof}

%\bibliographystyle{elsarticle-num}
%\bibliography{<your-bib-database>}

\end{document}